\newtheorem{theorem}{Theorem}
\newtheorem{problem}{Problem}
\newtheorem{lemma}{Lemma}
\newtheorem{proposition}{Proposition}
\newtheorem{corollary}{Corollary}
\newtheorem{conjecture}{Conjecture}
\newtheorem{pretheorema}{{\bf Theorem}}
\newenvironment{theorema}[1]{\begin{pretheorema}
{\hspace{-0.2em}{\rm #1}{\bf}}}{\end{pretheorema}}
\newtheorem{precor}{{\bf Corollary}}
\newtheorem{prelemlem}{{\bf Lemma}}
\newcommand{\gal}[1]{{\rm gal}(#1)}
\newcommand{\arb}[1]{{\rm arb}(#1)}
\newcommand{\sa}[1]{{\rm sa}(#1)}
\title{\bf On the star arboricity of hypercubes\thanks{This research is partially supported by a grant from the INSF.}
}
\author{{\sc Negin  Karisani\footnote{ karisani@alum.sharif.edu,  emahmood@sharif.edu,  n.sobhanikhakestar@gmail.com.}, E.S. Mahmoodian},
\\ {\sc Narges K. Sobhani}
 }
\date{}
\begin{document}
\maketitle
\vspace*{-1cm}
\begin{center}
\footnotesize{
Department of Mathematical Sciences \\
 Sharif University of Technology \\
 P.O. Box 11155--9415 \\
 Tehran, I.R. IRAN \\
}
 \end{center}
%
\begin{abstract}
A Hypercube $Q_n$ is a graph  in which the vertices are all binary vectors of length $n$,
and two vertices are adjacent if and only if their components differ in exactly one place.
A  galaxy or a star forest is a union of vertex disjoint stars. The star arboricity of a graph $G$, $\sa{G}$, is the minimum  number of galaxies which partition the edge set of $G$. In this paper among other results, we determine the exact values of $\sa{Q_n}$ for $n \in \{ 2^k-3, 2^k+1, 2^k+2, 2^i+2^j-4\}$, $i \geq j \geq 2$. We also improve the last known upper bound of $\sa{Q_n}$ and show the relation between $\sa{G}$ and square coloring.
\\
%
\end{abstract}
%
\section{Introduction and preliminaries}       
Hypercubes have numerous applications in computer science such as studying networks. Their  architecture has played an important role in the development of parallel processing and is still quite popular and influential \cite{parhami2002}. An \textit{$n$-cube} or \textit{$n$-dimensional hypercube}, $Q_n$, is a graph  in which the vertices are all binary vectors of length $n$,
and two vertices are adjacent if and only if the Hamming distance between
them is $1$, i.e. their components differ in $1$ place.  $Q_n$ is also defined recursively in terms of the cartesian product of two graphs as follows:
$$\begin{array}{l}
Q_1=K_2 \\
Q_n= Q_{n-1}  \square K_2,
\end{array}$$
where $\square$ stands for the cartesian product.

A  \textit{galaxy}  or a \textit{star forest} is  a  union  of  vertex  disjoint  stars. The \textit{star arboricity}  of a graph $G$, denoted by $\sa{G}$, is the minimum  number of galaxies which partition the edge set of $G$. The study of decomposing graphs into  galaxies is naturally suggested by the analysis of certain communication networks such as radio networks. As an example, suppose that we need to transmit once along every edge, in order to check that there is indeed a connection between each adjacent pair. It is explained in  \cite{MR1273589} that the minimum number of steps in which we can finish all the required transmissions is precisely $\sa{G}$. 
Star arboricity was introduced  by Akiyama and Kano in $1982$ \cite{MR778395}. They called it star decomposition index. In other literature some authors have used concepts and notations such as galactic number,  $\gal{G}$, and star number, ${\rm st}(G)$ or ${\rm s}(G)$. 

Star arboricity is closely related to \textit{arboricity}, the minimum number of forests which partition the edges of a graph $G$  and is denoted by $\arb{G}$. But unlike arboricity which is easy, even determining whether the star arboricity of an arbitrary  graph   is at  most $2$, is shown to be an NP-complete  problem \cite{MR959906}, also see \cite{MR2528046}. Clearly, by  definition $\arb{G} \leq \sa{G}$.  Furthermore,  it  is  easy to  see  that  any  tree  can  be  covered  by  two  star  forests,  thus $\sa{G} \leq 2\arb{G}$. Alon et al. \cite{MR1194728} showed that for each  $k$,  there exists a  graph  $G_k$  with  $\arb{G_k}=k$ and $\sa{G_k}=2k$. They also showed that for any graph $G$,  $\sa{G} \leq \arb{G}+O(\log_2 \Delta(G))$, where $\Delta(G)$ is the maximum degree of $G$.

In \cite{MR959906} and \cite{MR885263} the star arboricity of $Q_n$ is studied and it is shown that $\sa{Q_{2^n-2}}=2^{n-1}$ and $\sa{Q_{2^n-1}}=2^{n-1}+1$. Here by extending earlier results, we find exact values of $\sa{Q_n}$ for $n \in \{2^k-4, 2^k-3, 2^k, 2^k+1, 2^k+2, 2^k+4, 2^k+2^j-4\}$. Also we introduce a new upper bound and show a relation between $\sa{G}$ and square coloring. 
%
\section{Some earlier results}
In this section we mention some earlier results about the star arboricity of general graphs which are used in the next section.

In the following theorem Akiyama and Kano found an exact value for the star arboricity of complete graphs $K_n$.
\begin{theorema}{ \rm (\cite{MR778395})} \label{tcomplete}
Let $n \geq 4$. Then the star arboricity of the complete graph of order $n$ is $\lceil{n \over 2} \rceil+1$, {\rm i.e.}
$
\sa{K_n}=\lceil {n \over 2} \rceil+1.
$
\end{theorema}
In the  next  lemma  an upper bound  for  the star arboricity of  product  of  two graphs is given.
\begin{lemma}{ \rm (\cite{MR959906})} \label{lcart}
The star arboricity of the cartesian product of two graphs satisfies \linebreak
$
\sa{G \square H} \leq \sa{G}+\sa{H}.
$
\end{lemma}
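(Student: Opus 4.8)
The plan is to decompose the edge set of $G \square H$ according to which factor an edge ``moves in,'' and then to lift optimal galaxy partitions of $G$ and of $H$ separately. Recall that the vertices of $G \square H$ are the pairs $(u,v)$ with $u \in V(G)$ and $v \in V(H)$, and that two such vertices are adjacent exactly when they agree in one coordinate while the other coordinate gives an edge of the corresponding factor. Hence $E(G \square H)$ splits into the \emph{$G$-edges}, those of the form $\{(u_1,v),(u_2,v)\}$ with $u_1u_2 \in E(G)$ and $v$ fixed, and the \emph{$H$-edges}, those of the form $\{(u,v_1),(u,v_2)\}$ with $v_1v_2 \in E(H)$ and $u$ fixed. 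These two sets are disjoint and cover every edge.

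First I would fix an optimal star arboricity partition of $G$ into galaxies $A_1,\dots,A_s$ with $s = \sa{G}$, and likewise a partition of $H$ into galaxies $B_1,\dots,B_t$ with $t = \sa{H}$. For each $i$, let $\widehat{A_i}$ be the subgraph of $G \square H$ consisting, for every $v \in V(H)$, of all copies $\{(u_1,v),(u_2,v)\}$ of the edges $u_1u_2 \in A_i$; symmetrically define $\widehat{B_j}$ using the edges of $B_j$ lifted over all $u \in V(G)$. By construction the $s$ subgraphs $\widehat{A_1},\dots,\widehat{A_s}$ partition the $G$-edges and the $t$ subgraphs $\widehat{B_1},\dots,\widehat{B_t}$ partition the $H$-edges, so together these $s+t$ subgraphs partition $E(G \square H)$.

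The key step is to verify that each $\widehat{A_i}$ is genuinely a galaxy. Since $A_i$ is a union of vertex disjoint stars in $G$, a star of $A_i$ with centre $c$ and leaves $\ell_1,\dots,\ell_k$ lifts, for a fixed $v$, to the star in $G \square H$ with centre $(c,v)$ and leaves $(\ell_1,v),\dots,(\ell_k,v)$. Two such lifted stars are vertex disjoint: if they arise from the same star of $A_i$ but with distinct second coordinates $v \ne v'$, then the $H$-coordinate separates them, while if they arise from distinct stars of $A_i$, their first coordinates are already disjoint because $A_i$ is a galaxy. Thus $\widehat{A_i}$ is a vertex disjoint union of stars, and the identical argument applies to each $\widehat{B_j}$.

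Combining the two families yields a partition of $E(G \square H)$ into $s+t = \sa{G}+\sa{H}$ galaxies, giving the claimed bound. The only real obstacle is the vertex-disjointness check in the previous paragraph; the rest is bookkeeping once the edges are separated into $G$-edges and $H$-edges. It is worth noting that the lifting must be carried out factor by factor: placing a galaxy of $G$ and a galaxy of $H$ in the same subgraph would generally produce paths rather than stars, since a centre would then be free to move in both coordinates at once, which is exactly what this construction avoids.
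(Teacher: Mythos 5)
Your proof is correct: splitting $E(G \square H)$ into $G$-edges and $H$-edges and lifting an optimal galaxy partition of each factor layer by layer gives $\sa{G}+\sa{H}$ galaxies, and your vertex-disjointness check for the lifted stars is exactly the point that needs verifying. The paper itself states this lemma as a cited result from the literature and gives no proof, so there is nothing to compare against; your argument is the standard one and is complete.
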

Next we state some of Truszczy{\'n}ski's results \cite{MR885263} which will be used in this paper.
\begin{theorema}{ \rm (\cite{MR885263})}
\label{Tdreg}
Let $G$ be an $n$-regular graph, $n \geq 2$. Then
$
\sa{G} \geq \lceil {n \over 2} \rceil +1.
$
\end{theorema}
\begin{lemma}{ \rm (\cite{MR885263})} \label{l6}
Let  $G$ be an $n$-regular graph, where $n$ is an even number. If \ $\chi(G)>~{{n \over 2}+1}$ \ or if \ ${{n \over 2}+1}$ does not divide $|V(G)|$, then
$\sa{G} \geq  {{n \over 2}+2}$. 
\end{lemma}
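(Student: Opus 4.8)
The plan is to argue by contradiction, starting from the bound $\sa{G}\ge \frac{n}{2}+1$ supplied by Theorem~\ref{Tdreg}, and to show that the extremal case $\sa{G}=\frac{n}{2}+1$ forces $G$ to be so rigid that \emph{both} $\chi(G)\le \frac{n}{2}+1$ and $(\frac{n}{2}+1)\mid |V(G)|$ hold; this contradicts whichever hypothesis is assumed. So suppose $\sa{G}=k:=\frac{n}{2}+1$ and fix a partition of $E(G)$ into galaxies $F_1,\dots,F_k$. In each $F_i$ I orient every edge from its leaf toward the center of its star; then every vertex has out-degree at most $1$ in each $F_i$, hence out-degree at most $k$ overall, while the total out-degree equals $|E(G)|=\frac{n}{2}\,|V(G)|$.

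Writing $N=|V(G)|$ and $s(v)=k-\operatorname{outdeg}(v)\ge 0$ for the number of galaxies in which $v$ is \emph{not} a leaf (i.e.\ is a center or isolated), summing the out-degrees gives $\sum_{v} s(v)=kN-\frac{n}{2}N=N$. First I would show $s(v)\ge 1$ for every $v$: if $v$ were a leaf in all $k$ galaxies then $\deg_G(v)=k=\frac{n}{2}+1$, impossible for $n\ge 4$. Combined with $\sum_v s(v)=N$ this forces $s(v)=1$ for every vertex. Feeding this back into the identity $\deg_G(v)=\operatorname{outdeg}(v)+\sum_{i:\,v\text{ is a center}}\deg_{F_i}(v)$, where $\operatorname{outdeg}(v)=k-s(v)=\frac{n}{2}$, pins the structure down completely: each vertex is a center in \emph{exactly one} galaxy, is never isolated (an isolated slot would leave the remaining degree $\frac{n}{2}$ unaccounted for), and in its unique center-galaxy it is the center of a star with exactly $\frac{n}{2}$ leaves.

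With this rigidity in hand, I define $\phi(v)$ to be the unique index $i$ for which $v$ is a center of $F_i$. For an edge $uv$ lying in $F_j$, one endpoint is the center and the other a leaf of that star; the leaf is a center only in \emph{its own} galaxy, which differs from $F_j$, so $\phi(u)\ne\phi(v)$. Hence $\phi$ is a proper coloring with $k=\frac{n}{2}+1$ colors, giving $\chi(G)\le \frac{n}{2}+1$ and ruling out the first hypothesis. For the second, I count the vertices of a fixed galaxy $F_j$: its centers are exactly the color class $\phi^{-1}(j)$, say of size $c_j$, each carrying a star with $\frac{n}{2}$ leaves, and since no vertex is ever isolated the remaining $N-c_j$ vertices are precisely these leaves, so $N-c_j=\frac{n}{2}c_j$, that is $N=(\frac{n}{2}+1)c_j$. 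Thus $(\frac{n}{2}+1)\mid N$, ruling out the second hypothesis and completing the contradiction.

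The main obstacle is the extraction of the rigid structure in the second paragraph: the out-degree and degree bookkeeping must be done tightly enough to conclude not merely that $\sa{G}\ge \frac{n}{2}+1$ (which is Theorem~\ref{Tdreg}) but that equality leaves exactly one unit of slack at \emph{every} vertex, and that this slack is a center-slot rather than an isolated-slot with a star of the forced size $\frac{n}{2}$. It is precisely this step that converts the galaxy decomposition into a genuine proper coloring with equal-sized color classes; once it is secured, the coloring and divisibility conclusions are immediate. I note that the inequality $s(v)\ge 1$ relies on $n\ge 4$, so the decisive counting is carried out in that range.
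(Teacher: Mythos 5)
The paper does not prove this lemma at all: it is quoted from Truszczy\'nski \cite{MR885263}, so there is no in-paper argument to compare against. Judged on its own, your proof is correct and complete. The counting is tight: orienting leaf-to-center gives total out-degree $|E(G)|=\frac{n}{2}|V(G)|$, so $\sum_v s(v)=N$, and $s(v)\ge 1$ (since a vertex that is a leaf in all $k$ galaxies would have degree $k=\frac{n}{2}+1<n$) forces $s(v)=1$ everywhere; the degree identity then makes every vertex the center of a $K_{1,n/2}$ in exactly one galaxy and a leaf in all others, from which both the proper $(\frac{n}{2}+1)$-coloring and the equal class sizes $N=(\frac{n}{2}+1)c_j$ follow. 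Two small points are worth making explicit. First, for a single-edge star the ``center'' is ambiguous, so the orientation should be fixed by an arbitrary choice of center per component; this does not affect the count, and in the rigid extremal structure every star has $\frac{n}{2}\ge 2$ leaves, so centers and leaves are then unambiguous and the map $\phi$ is well defined. Second, your restriction to $n\ge 4$ is not merely a convenience of the argument: the lemma as stated actually fails for $n=2$ (e.g.\ $K_3$ has $\chi=3>2$ yet $\sa{K_3}=2$), so the hypothesis $n\ge 4$ is implicitly required by the source and your proof correctly identifies where it enters.
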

The following question is also raised about the upper bound for $\sa{G}$.
\begin{problem} { \rm (\cite{MR885263})}
Is it true that for every $n$-regular graph $G$,
$$
\Big\lceil {n \over 2} \Big\rceil +1 \leq \sa{G} \leq \Big\lceil {n \over 2} \Big\rceil +2 \ ?
$$
\end{problem}
\begin{lemma}{ \rm (\cite{MR885263})} \label{ltruz}
If $k \geq 2$, then there is a partition ${\cal A}=\{ A_1,A_2, \ldots , A_{2^{k-1}} \}$ \ of \ $V(Q_{2^k-2})$ such that
\begin{enumerate}[(i)]
\item 
for every $i$, $1\leq i \leq 2^{k-1}$, $A_i$ is independent,
\item
for every $i$, $j$, $1\leq i < j\leq 2^{k-1}$, the subgraph of $Q_{2^k-2}$ induced by $A_i \cup A_j$, is $2$-regular.
\end{enumerate}
\end{lemma}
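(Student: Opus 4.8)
The plan is to realize the partition algebraically. Identify $V(Q_{2^k-2})$ with $\mathbb{F}_2^{\,n}$, where $n=2^k-2$, so that two vertices are adjacent precisely when they differ by a standard basis vector $e_\ell$. The key observation is that the index set has size $2^{k-1}$, which is exactly the number of elements of $\mathbb{F}_2^{\,k-1}$, and that $n=2(2^{k-1}-1)$ equals twice the number of \emph{nonzero} vectors of $\mathbb{F}_2^{\,k-1}$. This numerical coincidence is the whole engine of the construction.

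First I would build an explicit $(k-1)\times n$ matrix $M$ over $\mathbb{F}_2$ whose columns are exactly the $2^{k-1}-1$ nonzero vectors of $\mathbb{F}_2^{\,k-1}$, each listed twice; since there are $2(2^{k-1}-1)=n$ such columns, this is well defined. Let $\phi:\mathbb{F}_2^{\,n}\to\mathbb{F}_2^{\,k-1}$ be the linear map $\phi(x)=Mx$, and define $A_c=\phi^{-1}(c)$ for $c\in\mathbb{F}_2^{\,k-1}$. Because the standard basis vectors of $\mathbb{F}_2^{\,k-1}$ occur among the columns, $M$ has full rank $k-1$, so each of the $2^{k-1}$ fibers is nonempty; the sets $A_c$ therefore form a partition into $2^{k-1}$ parts, which I relabel $A_1,\dots,A_{2^{k-1}}$.

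Next I would verify the two conditions by translating them into statements about the columns of $M$. For (i), if $v$ and $v+e_\ell$ both lay in some $A_c$, then $\phi(e_\ell)=0$, i.e.\ the $\ell$-th column of $M$ vanishes, which is impossible; hence each $A_c$ is independent. For (ii), fix $i\neq j$ and a vertex $v\in A_i$; a neighbor $v+e_\ell$ lies in $A_j$ if and only if $\phi(e_\ell)=i+j$, so the number of neighbors of $v$ in $A_j$ equals the number of columns of $M$ equal to the fixed nonzero vector $d=i+j$, which is exactly $2$ by construction, and the same count holds for vertices of $A_j$. Since $A_i$ and $A_j$ are independent, the subgraph induced by $A_i\cup A_j$ is bipartite and every vertex has degree $2$, so it is $2$-regular.

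The construction is essentially forced and the verification is routine; the only real step of insight is reformulating (i) and (ii) as ``all columns nonzero'' and ``each nonzero vector occurs exactly twice as a column,'' after which the identity $n=2(2^{k-1}-1)$ makes everything fit. The point to be careful about is confirming that the fibers are nonempty and correct in number, which follows from $M$ having rank $k-1$. I would also remark that the lemma could instead be proved recursively through $Q_n=Q_{n-1}\,\square\,K_2$, but the linear-algebraic realization is cleaner and makes both conditions immediately transparent.
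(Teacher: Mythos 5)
Your proof is correct. Note that the paper does not actually prove this lemma --- it is quoted from Truszczy\'nski's paper and described there only as ``constructive,'' with Table~1 giving the resulting partition of $Q_6$ --- so there is no in-paper argument to compare against; your coset construction (fibers of a surjective linear map $\mathbb{F}_2^{\,2^k-2}\to\mathbb{F}_2^{\,k-1}$ whose matrix lists every nonzero vector of $\mathbb{F}_2^{\,k-1}$ exactly twice as a column) is exactly in the spirit of the cited constructive proof, and indeed reproduces the $Q_6$ example ($k=3$: four fibers of size $16$). All the checkpoints are sound: the matrix has full rank because the standard basis vectors occur among its columns, so all $2^{k-1}$ fibers are nonempty cosets; independence of each fiber is equivalent to all columns being nonzero; and for $i\neq j$ each vertex of $A_i\cup A_j$ has exactly two neighbors in the other part (the two coordinates whose column equals $i+j\neq 0$) and none in its own, giving $2$-regularity.
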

Proof of Lemma~\ref{ltruz} in \cite{MR885263} is constructive and as an example a decomposition of $Q_6$ into $4$ sets is presented  in Table~\ref{tab:Q6}. This will be used in the next section.
\begin{table}[h]
\begin{center}
\includegraphics[scale=.45]{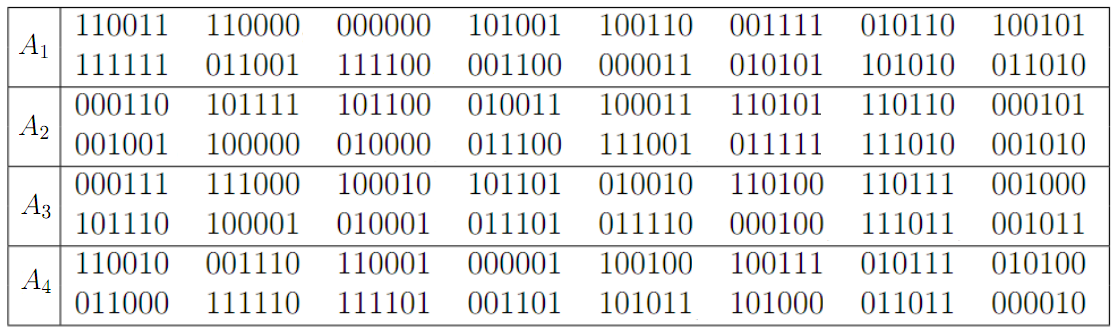}
\caption{A vertex decomposition of $Q_6$ by Lemma~\ref{ltruz}. }\label{tab:Q6}
\end{center}
\end{table}

In the earlier results the star arboricity of $Q_n$ was determined in just two cases. 
\begin{theorema}{ \rm (\cite{MR885263})} \label{t1}
$\sa{Q_{2^k-2}}=2^{k-1}$ for $k \geq 2$.
\end{theorema}
%
%
%
%
By Theorem~\ref{Tdreg},  Lemma~\ref{l6} and  Theorem~\ref{t1} we have,
\begin{corollary} \label{ceven}
$\sa{Q_{n}} \ge \lfloor \frac{n}{2} \rfloor +2$, \ except for $n=2^a-2$, $a\ge 2$.

For $n=2^a-2$, we have $\sa{Q_{2^a-2}}= \lfloor \frac{n}{2} \rfloor +1=2^{a-1}$.
\end{corollary}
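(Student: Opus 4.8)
The plan is to split into cases according to the parity of $n$ and assemble the three cited results, using only three elementary facts about the hypercube: $Q_n$ is $n$-regular, it has $|V(Q_n)| = 2^n$ vertices, and it is bipartite so that $\chi(Q_n) = 2$.

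First I would dispose of odd $n$. Here $\lceil n/2 \rceil = \lfloor n/2 \rfloor + 1$, so Theorem~\ref{Tdreg} applied to the $n$-regular graph $Q_n$ gives at once $\sa{Q_n} \geq \lceil n/2 \rceil + 1 = \lfloor n/2 \rfloor + 2$. No exceptional values intrude, since every $2^a-2$ with $a \geq 2$ is even.

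For even $n$ the estimate $\lceil n/2 \rceil + 1 = n/2 + 1$ from Theorem~\ref{Tdreg} is one short of the claim, so I would bring in Lemma~\ref{l6}. Its chromatic-number hypothesis is inert here, because $\chi(Q_n) = 2 \leq n/2 + 1$ for all $n \geq 2$; hence everything rests on the divisibility hypothesis, namely that $n/2 + 1$ does not divide $2^n$. The one genuinely substantive step is the arithmetic translation: the divisors of $2^n$ are exactly the powers of $2$, so $n/2 + 1 \mid 2^n$ if and only if $n/2 + 1 = 2^{b}$ for some $b$, i.e. $n = 2^{b+1} - 2$. Setting $a = b+1$, this is precisely the exceptional family $n = 2^a - 2$. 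Consequently, for even $n \neq 2^a - 2$ the number $n/2 + 1$ is not a power of $2$ and so fails to divide $2^n$, whence Lemma~\ref{l6} delivers $\sa{Q_n} \geq n/2 + 2 = \lfloor n/2 \rfloor + 2$.

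Finally, for the excepted family $n = 2^a-2$ I would simply quote Theorem~\ref{t1}, which supplies the exact value $\sa{Q_{2^a-2}} = 2^{a-1}$; since $n/2 + 1 = 2^{a-1}$, this reads $\sa{Q_n} = \lfloor n/2 \rfloor + 1$, confirming that the lower bound honestly drops by one on exactly this family. I do not expect any real obstacle: the entire content is the equivalence $(n/2+1) \mid 2^n \iff n = 2^a-2$, and the remainder is bookkeeping with floors and ceilings.
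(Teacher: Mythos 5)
Your proof is correct and is exactly the argument the paper intends: the paper states this corollary with no written proof, merely citing Theorem~\ref{Tdreg}, Lemma~\ref{l6} and Theorem~\ref{t1}, and your case split (odd $n$ via Theorem~\ref{Tdreg}; even $n$ via the observation that $n/2+1$ divides $|V(Q_n)|=2^n$ iff $n=2^a-2$, so Lemma~\ref{l6} applies otherwise; the exceptional family via Theorem~\ref{t1}) is precisely the intended assembly. The only caveat, shared with the paper's own statement, is that the bound implicitly assumes $n\ge 2$, since $\sa{Q_1}=1$.
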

\begin{corollary}{ \rm (also in \cite{MR959906})}\label{l2}
$\sa{Q_{2^k-1}}=2^{k-1}+1$, $k \geq 2$.
\end{corollary}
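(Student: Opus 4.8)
The plan is to sandwich $\sa{Q_{2^k-1}}$ between matching lower and upper bounds, both of which follow almost immediately from the results already collected in this section.

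For the lower bound, I would observe that $Q_{2^k-1}$ is $(2^k-1)$-regular and apply Theorem~\ref{Tdreg} with $n = 2^k-1 \geq 2$, giving
$$\sa{Q_{2^k-1}} \geq \Big\lceil \frac{2^k-1}{2} \Big\rceil + 1 = 2^{k-1} + 1,$$
where the only computation is to note that $2^k-1$ is odd, so $\lceil (2^k-1)/2 \rceil = 2^{k-1}$. Equivalently, one could read off the same bound from Corollary~\ref{ceven}, since $2^k-1$ is never of the form $2^a-2$.

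For the upper bound, I would exploit the recursive product structure $Q_{2^k-1} = Q_{2^k-2} \square K_2$ together with the subadditivity of star arboricity over cartesian products (Lemma~\ref{lcart}):
$$\sa{Q_{2^k-1}} = \sa{Q_{2^k-2} \square K_2} \leq \sa{Q_{2^k-2}} + \sa{K_2}.$$
Here $\sa{K_2} = 1$, since a single edge is itself a star, and $\sa{Q_{2^k-2}} = 2^{k-1}$ by Theorem~\ref{t1}. This yields $\sa{Q_{2^k-1}} \leq 2^{k-1} + 1$, and combining the two bounds gives the claimed equality.

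I do not anticipate any genuine obstacle here: the whole argument is an exercise in assembling Theorem~\ref{Tdreg}, Lemma~\ref{lcart} and Theorem~\ref{t1}, and the only points requiring the slightest care are the parity computation $\lceil (2^k-1)/2 \rceil = 2^{k-1}$ and the trivial evaluation $\sa{K_2}=1$. The conceptual content is simply that the sharp regular-graph lower bound is attained once the known value $\sa{Q_{2^k-2}} = 2^{k-1}$ is augmented by a single extra galaxy to cover the edges in the last coordinate direction.
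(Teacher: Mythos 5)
Your proposal is correct and matches the paper's own argument: the lower bound from the regular-graph bound (Theorem~\ref{Tdreg}, equivalently Corollary~\ref{ceven}) and the upper bound from $Q_{2^k-1}=Q_{2^k-2}\square K_2$ via Lemma~\ref{lcart} and Theorem~\ref{t1}. The paper's one-line proof is exactly this sandwich, with $\sa{K_2}=1$ left implicit.
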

\begin{proof}
$\sa{Q_{2^k-1}} \geq 2^{k-1}+1$ by Corollary~\ref{ceven}, and $\sa{Q_{2^k-1}}\leq \sa{Q_{2^k-2}}+1=2^{k-1}+1$.\end{proof}
%
The following bounds also are given in \cite{MR885263}.
\begin{theorema}{ \rm (\cite{MR885263})} \label{tlog}
$\lceil {n+1 \over 2}\rceil+1 \leq \sa{Q_n} \leq \lceil {n \over 2}\rceil +\log_2 n$, \ for every $n \geq 3$, {\rm [}except for $n=2^{a}-2$ \  and \ $a\geq2$  {\rm ]}.
\end{theorema}
In the next section we introduce more exact values of star arboricity of some $Q_n$. 
\section{Hypercubes}
In  this section we focus on the star arboricity of hypercubes and extend earlier results.

Based on the results we conjecture that,
\begin{conjecture}
\label{qn}
 For \  $n \neq 2^a-2$, $\sa{Q_{n}} = \lfloor \frac{n}{2} \rfloor +2$,  and for \  $n = 2^a-2$, $\sa{Q_{2^a-2}}= \lfloor \frac{n}{2} \rfloor +1=2^{a-1}$, $a \geq 2$.
\end{conjecture}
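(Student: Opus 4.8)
The statement is a complete determination of $\sa{Q_n}$, so the plan is to split it into matching lower and upper bounds and to note that the lower bound is already in hand. Indeed, Corollary~\ref{ceven} gives $\sa{Q_n}\ge \lfloor n/2\rfloor+2$ for all $n\ne 2^a-2$, while Theorem~\ref{t1} settles the exceptional case exactly as $\sa{Q_{2^a-2}}=2^{a-1}=\lfloor (2^a-2)/2\rfloor+1$. Thus the entire conjecture reduces to the single upper bound
$$
\sa{Q_n}\ \le\ \left\lfloor \frac{n}{2}\right\rfloor+2\qquad (n\ne 2^a-2),
$$
i.e.\ to exhibiting, for each such $n$, a decomposition of the $n\,2^{\,n-1}$ edges of $Q_n$ into $\lfloor n/2\rfloor+2$ galaxies.

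First I would map out exactly how far the product method reaches, since $Q_{a+b}=Q_a\square Q_b$ and Lemma~\ref{lcart} give $\sa{Q_{a+b}}\le\sa{Q_a}+\sa{Q_b}$. Writing $n=\sum_{i=1}^{p}(2^{k_i}-2)$ and using the optimal base decompositions of Theorem~\ref{t1} yields $\sa{Q_n}\le\sum_i 2^{k_i-1}=\lfloor n/2\rfloor+p$, one extra galaxy per block. This hits the conjectured value \emph{exactly} when $p=2$: the family $n=2^i+2^j-4=(2^i-2)+(2^j-2)$ (which also absorbs $2^k-4$ and $2^k+4$) is precisely the set of $n$ expressible as a sum of two base dimensions, and this is why those cases are provable here by products alone. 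For a single block ($n=2^k-2$) one even beats the bound, recovering the $+1$ of the exceptional case. The trouble is that every other $n$ forces $p\ge 3$ — and odd $n$ cannot be written with the even blocks $2^k-2$ at all, requiring an extra $Q_1$ or $Q_{2^k-1}$ factor — so products overshoot by $p-2\ge 1$, and the optimized choice of blocks only gives $\lfloor n/2\rfloor+O(\log_2 n)$, exactly the slack of Theorem~\ref{tlog}. Hence no product-based combination can close the gap, and a genuinely new construction is needed in the interior of each dyadic interval.

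The constructive engine I would build on is the vertex partition of Lemma~\ref{ltruz}: $V(Q_{2^k-2})$ splits into $2^{k-1}$ independent sets whose pairwise unions induce $2$-regular bipartite subgraphs, i.e.\ disjoint even cycles, each of which decomposes into two perfect matchings and hence into two galaxies. Organizing the $\binom{2^{k-1}}{2}$ pairs along a one-factorization of $K_{2^{k-1}}$ lets one amortize these matchings so that every vertex is a leaf in at most one star per galaxy, which is what produces the optimal count $2^{k-1}$ for $n=2^k-2$. For the hard values $2^k-2<n<2^{k+1}-2$ that are not sums of two base dimensions, the plan is to start from the partition of the largest base cube that fits, treat the extra $n-(2^k-2)$ coordinate directions by \emph{adjoining} their matchings to existing galaxies rather than opening fresh ones, and pay for the resulting parity defect with the two spare galaxies that the $+2$ in the bound allows.

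The hard part — and the reason this remains a conjecture rather than a theorem — is exactly this amortization across a full dyadic interval. Generic methods are provably insufficient: Alon et al.~\cite{MR1194728} exhibit graphs for which the additive $O(\log_2\Delta)$ term in $\sa{G}\le\arb{G}+O(\log_2\Delta(G))$ cannot be reduced to a constant, so any proof must exploit the specific independent-set geometry of $Q_n$ rather than a product or probabilistic bound. Concretely, each coordinate direction added beyond the base cube redistributes edges over the parts $A_1,\dots,A_{2^{k-1}}$, and keeping every vertex a leaf in at most one star per galaxy while using only two spare galaxies demands a single combinatorial scheme valid for all $n$ simultaneously. It is this global control — as opposed to the special values $n\in\{2^k-3,\,2^k+1,\,2^k+2,\,2^i+2^j-4\}$, which are settled in this paper by ad hoc constructions — that I expect to be the central obstacle.
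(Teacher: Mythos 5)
The statement you were given is Conjecture~\ref{qn}: the paper does not prove it, and neither do you --- your text is an honest reduction-plus-obstruction analysis rather than a proof, and you say so explicitly. Given that, your assessment is accurate and aligns well with what the paper actually does. You correctly observe that the lower bound is fully settled (Corollary~\ref{ceven} for $n\neq 2^a-2$, Theorem~\ref{t1} for the exceptional case), so only the upper bound $\sa{Q_n}\le\lfloor n/2\rfloor+2$ is open; you correctly compute that writing $n=\sum_{i}(2^{k_i}-2)$ and applying Lemma~\ref{lcart} gives $\lfloor n/2\rfloor+p$, which matches the target exactly when $p=2$ (Proposition~\ref{l4}) and overshoots otherwise; and you correctly identify Lemma~\ref{ltruz} as the engine behind the constructions that go beyond products. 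Your proposed strategy for the hard cases --- start from the partition of the largest base cube, adjoin the matchings from the extra coordinates to existing galaxies, and absorb the leftover with the two spare galaxies --- is essentially the paper's own proof of Theorem~\ref{l9} for $n=2^k+1$ (where the leftover is handled by Lemma~\ref{lg1}), so your roadmap is the right one; it simply has not been carried out beyond those special values, which is precisely why the statement remains a conjecture.

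Two refinements worth adding. First, the paper's Theorem~\ref{tt} gives a clean general reduction you circle around but never state: if the conjecture holds for an odd $n$ then it holds for $n-1$ and $n+1$, so it suffices to treat odd $n$ only --- this is sharper than your remark that odd $n$ cannot be reached by even blocks. Second, your closing classification slightly misattributes the methods: $n=2^i+2^j-4$ and $n=2^k-3$ are settled by products and monotonicity respectively, not by ad hoc constructions; only $n=2^k+1$ (and hence $2^k$, $2^k+2$ via Theorem~\ref{tt}) requires the genuinely new galaxy construction. The first open case is $\sa{Q_{11}}$, for which the paper only knows $8\le\sa{Q_{11}}\le 9$.
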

Note that the second part of the Conjecture~\ref{qn} is known to be true (Theorem~\ref{t1}). 
\begin{theorem}\label{tt}
If Conjecture~\ref{qn} holds for an odd integer $n$, then it holds for $n-1$ and $n+1$.
\end{theorem}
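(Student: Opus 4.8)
The plan is to leverage the hypothesis that Conjecture~\ref{qn} holds for the odd value $n$. Since an odd number is never of the form $2^a-2$, this hypothesis reads $\sa{Q_n}=\lfloor n/2\rfloor+2=\tfrac{n-1}{2}+2$. First I would do the bookkeeping forced by parity: with $n$ odd, both $n-1$ and $n+1$ are even, and for each I must decide whether it is the exceptional index $2^a-2$. Now $n-1=2^a-2$ forces $n=2^a-1$, while $n+1=2^a-2$ forces $n=2^a-3$; in either situation the relevant even index already equals $2^a-2$, so the corresponding instance of the conjecture is known from Theorem~\ref{t1} and there is nothing further to prove. (One checks these two cases cannot occur simultaneously for $a\ge 2$, and that the ``other'' neighbour is then non-exceptional.) Hence I may assume $n-1$ and $n+1$ are both non-exceptional and must establish $\sa{Q_{n-1}}=\tfrac{n-1}{2}+2$ and $\sa{Q_{n+1}}=\tfrac{n+1}{2}+2$.

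For $n-1$, the lower bound $\sa{Q_{n-1}}\ge\lfloor(n-1)/2\rfloor+2=\tfrac{n-1}{2}+2$ is immediate from Corollary~\ref{ceven}, using $n-1\neq 2^a-2$. For the matching upper bound I would invoke monotonicity of star arboricity under subgraphs: $Q_{n-1}$ occurs as a copy inside $Q_n=Q_{n-1}\square K_2$, and restricting any galaxy decomposition of $Q_n$ to the edges of that copy again yields galaxies (deleting edges from vertex-disjoint stars preserves vertex-disjoint stars), so $\sa{Q_{n-1}}\le\sa{Q_n}=\tfrac{n-1}{2}+2$. The two bounds coincide, giving the conjectured value for $n-1$.

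For $n+1$, Corollary~\ref{ceven} again supplies the lower bound $\sa{Q_{n+1}}\ge\lfloor(n+1)/2\rfloor+2=\tfrac{n+1}{2}+2$. For the upper bound I would apply the cartesian-product inequality of Lemma~\ref{lcart} to $Q_{n+1}=Q_n\square K_2$: since $K_2$ is a single star, $\sa{K_2}=1$, whence $\sa{Q_{n+1}}\le\sa{Q_n}+\sa{K_2}=\big(\tfrac{n-1}{2}+2\big)+1=\tfrac{n+1}{2}+2$. Equality follows, matching the conjecture for $n+1$.

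I do not expect a genuine obstacle here; the result is essentially a bookkeeping consequence of earlier machinery. The only points requiring care are (i) routing the two power-of-two neighbours into the already-settled Theorem~\ref{t1}, and (ii) verifying that the parity identity $\lfloor n/2\rfloor=\tfrac{n-1}{2}$ makes the free lower bounds of Corollary~\ref{ceven} align exactly with the upper bounds obtained from subgraph monotonicity (for $n-1$) and from Lemma~\ref{lcart} (for $n+1$). The single ingredient not quoted verbatim earlier is subgraph monotonicity of $\sa{\cdot}$, which is immediate from the definition and is the only thing I would pause to justify.
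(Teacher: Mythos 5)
Your proposal is correct and follows essentially the same route as the paper: the lower bounds come from Corollary~\ref{ceven}, the upper bound for $n-1$ from $\sa{Q_{n-1}}\le\sa{Q_n}$ (subgraph monotonicity), and the upper bound for $n+1$ from $\sa{Q_{n+1}}\le\sa{Q_n}+1$ via the cartesian product with $K_2$. You are somewhat more explicit than the paper about dispatching the exceptional indices $2^a-2$ to Theorem~\ref{t1} and about justifying monotonicity, but the argument is the same.
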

\begin{proof}
If $n-1=2^a-2$ or $n+1=2^a-2$ for some $a$, then the statement follows. Otherwise let $n=2k+1$. For $n-1=2k$, we have $\sa{Q_{2k}} \leq \sa{Q_{2k+1}}=k+2$; the statement follows by Corollary~\ref{ceven}. For $n+1=2k+2$, we have $\sa{Q_{2k+2}} \leq \sa{Q_{2k+1}}+1=k+3$ and  again by  Corollary~\ref{ceven} the statement follows.~\end{proof}
By Theorem~\ref{tt}, one only needs to show Conjecture~\ref{qn}, for odd numbers.
\subsection{Exact values }
\begin{proposition}\label{l4}
$\sa{Q_{2^k+2^{j}-4}}=2^{k-1}+2^{j-1}$, \ for \ $k \geq j \geq 2$.
\end{proposition}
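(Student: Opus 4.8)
The plan is to pin the value from both sides and observe that the generic lower bound and a product upper bound coincide. Write $n := 2^k+2^j-4$. Since $k,j\geq 2$, both $2^k$ and $2^j$ are even, so $n$ is even and $\lfloor n/2\rfloor = 2^{k-1}+2^{j-1}-2$; hence the target value $2^{k-1}+2^{j-1}$ is exactly $\lfloor n/2\rfloor +2$. This is precisely the bound furnished by Corollary~\ref{ceven}, provided $n$ is not of the exceptional form $2^a-2$.

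So the first genuine step is to rule out $n=2^a-2$, which is where the hypotheses $k,j\geq 2$ are actually needed. Because $k\geq j\geq 2$, both $2^k$ and $2^j$ are divisible by $4$, so $n\equiv 0\pmod 4$. If one had $2^k+2^j-4=2^a-2$, i.e.\ $2^k+2^j=2^a+2$, then the left side is $\equiv 0\pmod 4$, while the right side is $\equiv 2\pmod 4$ for $a\geq 2$, equals $4<8\leq 2^k+2^j$ for $a=1$, and is odd for $a=0$; each case is a contradiction. Hence $n\neq 2^a-2$, and Corollary~\ref{ceven} gives the lower bound $\sa{Q_n}\geq 2^{k-1}+2^{j-1}$.

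For the matching upper bound I would exploit the product structure of the cube. From the recursive definition $Q_m=Q_{m-1}\,\square\,K_2$ and the associativity of the cartesian product one obtains $Q_a\,\square\,Q_b=Q_{a+b}$. Taking $a=2^k-2$ and $b=2^j-2$ (both at least $2$, since $k,j\geq 2$) and using $(2^k-2)+(2^j-2)=n$ yields
\[
Q_{2^k+2^j-4}=Q_{2^k-2}\,\square\,Q_{2^j-2}.
\]
Now Lemma~\ref{lcart} bounds the star arboricity of a product by the sum of the factors, and Theorem~\ref{t1} evaluates each factor exactly, so
\[
\sa{Q_n}\leq \sa{Q_{2^k-2}}+\sa{Q_{2^j-2}}=2^{k-1}+2^{j-1}.
\]
Combining this with the lower bound of the previous paragraph gives equality.

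I do not expect a serious obstacle here: the whole argument rests on the arithmetic identity $(2^k-2)+(2^j-2)=n$ together with two results already in hand, namely Lemma~\ref{lcart} and Theorem~\ref{t1}. The one point demanding care is the exceptional case of Corollary~\ref{ceven}; the modulo-$4$ check verifying $n\neq 2^a-2$ is what guarantees that the lower bound equals $\lfloor n/2\rfloor+2$ rather than the smaller $\lfloor n/2\rfloor+1$, and hence that it meets the product upper bound exactly.
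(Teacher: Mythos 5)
Your proof is correct and follows essentially the same route as the paper: the upper bound via $Q_{2^k+2^j-4}=Q_{2^k-2}\,\square\,Q_{2^j-2}$ together with Lemma~\ref{lcart} and Theorem~\ref{t1}, and the lower bound from Corollary~\ref{ceven}. Your explicit modulo-$4$ check that $2^k+2^j-4\neq 2^a-2$ is a detail the paper leaves implicit, and it is a worthwhile addition.
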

\begin{proof}
By Lemma~\ref{lcart} and Theorem~\ref{t1}, each we have $\sa{Q_{2^k+2^{j}-4}} \le \sa{Q_{2^k-2}}+ \sa{Q_{2^{j}-2}} =2^{k-1}+2^{j-1 }$. Also by Corollary~\ref{ceven}, $\sa{Q_{2^k+2^{j}-4}} \ge 2^{k-1}+2^{j-1}$. So $\sa{Q_{2^k+2^{j}-4}}=2^{k-1}+2^{j-1}$.~\end{proof}
%
%
%
The following lemma is useful tool for the next theorem.
\begin{lemma}\label{lg1}
If a graph $G$ satisfies the following conditions then $\sa{G}=2$.
\begin{enumerate}
\item \label{c1}
G is tripartite with $V(G)=V_1 \cup V_2 \cup V_3$.
\item \label{c2}
Each vertex in $V_1$ or $V_2$ has degree $4$ and each vertex in $V_3$ has degree $2$.
\item \label{c3}
 Each vertex in $V_1$ or $V_2$ has exactly $2$ neighbours in $V_3$ and each vertex in $V_3$ is adjacent to both $V_1$ and $V_2$.
\end{enumerate}
\end{lemma}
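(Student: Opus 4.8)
The plan is to first translate the three hypotheses into a clear combinatorial picture, and then to separate the (easy) lower bound from the constructive upper bound. Combining the conditions, a vertex of $V_1$ has degree $4$ with exactly $2$ neighbours in $V_3$, and since $G$ is tripartite its remaining $2$ neighbours lie in $V_2$; symmetrically each vertex of $V_2$ has exactly $2$ neighbours in $V_1$, and each vertex of $V_3$ has exactly one neighbour in $V_1$ and one in $V_2$. Hence the bipartite subgraph $D$ formed by the edges between $V_1$ and $V_2$ is $2$-regular, so it is a disjoint union of even cycles. I would record these structural facts as the first step, since the whole argument rests on them.

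For the lower bound, note that $D$ is nonempty (as $V_3$ is joined to $V_1$, we have $V_1\neq\emptyset$, and its vertices carry $V_1V_2$-edges) and $2$-regular, so $G$ contains a cycle and is therefore not a forest; consequently $\sa{G}\ge 2$.

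For the upper bound I would exhibit two galaxies explicitly, the idea being to make every vertex of $V_3$ a leaf and to use $V_1$ and $V_2$ as the two families of centres. Concretely, colour every $V_1V_3$-edge with colour $1$ and every $V_2V_3$-edge with colour $2$; this choice is forced if colour $1$ is to be a star forest centred in $V_1$ and colour $2$ a star forest centred in $V_2$, because a $V_1V_3$-edge is incident to no vertex of $V_2$. Then on each even cycle of $D$ I would take a proper $2$-edge-colouring, so that every vertex of $V_1\cup V_2$ meets exactly one $D$-edge of each colour. In colour $1$ each vertex of $V_1$ then has degree $3$ (its two $V_3$-edges together with one $D$-edge), while every vertex of $V_2$ and of $V_3$ has degree exactly $1$; thus every colour-$1$ edge joins a $V_1$-centre to a degree-one vertex, and the colour-$1$ graph is a disjoint union of stars centred at $V_1$. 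By symmetry colour $2$ is a disjoint union of stars centred at $V_2$, giving $\sa{G}\le 2$ and hence $\sa{G}=2$.

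The only delicate point is the coupling between the $V_iV_3$-edges and the cycle edges: one must check that appending the $D$-edges to these stars never creates a path on four vertices or a cycle. This is exactly guaranteed by the forced colouring, which keeps every vertex of $V_2$ (respectively $V_1$) at degree one in colour $1$ (respectively colour $2$), so no two centres are ever linked through a common neighbour. I expect the careful bookkeeping of these colour-degrees to be the main thing to get right; everything else is routine verification.
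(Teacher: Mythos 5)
Your proposal is correct and follows essentially the same route as the paper: both decompose the $2$-regular bipartite graph between $V_1$ and $V_2$ into two perfect matchings (your proper $2$-edge-colouring of the even cycles) and attach all $V_1V_3$-edges to one matching and all $V_2V_3$-edges to the other, yielding two galaxies of stars centred at $V_1$ and $V_2$ respectively. Your explicit degree bookkeeping and the (trivial) lower bound $\sa{G}\ge 2$ are slightly more careful than the paper's write-up, but the construction is the same.
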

\begin{proof}
We decompose the edges of $G$ into two galaxies in such a way that all of the stars are $K_{1,3}$. The induced subgraph on $H=\langle V_1 \cup V_2 \rangle$ is a bipartite graph. This bipartite graph must be a disjoint union of some even cycles. So we can partition edges of $H$ into the sets $M_1$ and $M_2$, such that each of them is a perfect matching in $H$. Now we partition the edges of $G$ into two galaxies $G_1$ and $G_2$: The first one, $G_1$, is the union of $M_1$, with an induced subgraph of $\langle V_1 \cup V_3 \rangle$. In a similar way $G_2$ is the union of $M_2$, with an induced subgraph of $\langle V_2 \cup V_3 \rangle$.
\end{proof}

\begin{theorem} \label{l9}
$\sa{Q_{2^k+1}}=2^{k-1}+2$, \ for \ $k \geq 2$.
\end{theorem}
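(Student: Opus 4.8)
The plan is to establish matching upper and lower bounds for $\sa{Q_{2^k+1}}$.

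\textbf{The lower bound.} Since $Q_{2^k+1}$ is an $n$-regular graph with $n=2^k+1$, Corollary~\ref{ceven} immediately gives $\sa{Q_{2^k+1}} \ge \lfloor \frac{2^k+1}{2} \rfloor +2 = 2^{k-1}+2$, provided $2^k+1 \neq 2^a-2$ for any $a$, which is clear since $2^k+1$ is odd and $2^a-2$ is even. So the lower bound $\sa{Q_{2^k+1}} \ge 2^{k-1}+2$ comes for free from the earlier results and requires no new work.

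\textbf{The upper bound.} The content of the theorem is therefore to exhibit a decomposition of $E(Q_{2^k+1})$ into exactly $2^{k-1}+2$ galaxies. I would start from the known optimal decomposition of $Q_{2^k-2}$ into $2^{k-1}$ galaxies guaranteed by Theorem~\ref{t1}. Writing $Q_{2^k+1} = Q_{2^k-2} \, \square \, Q_3$, the naive approach via Lemma~\ref{lcart} gives only $\sa{Q_{2^k-2}} + \sa{Q_3} = 2^{k-1} + 3$ galaxies, which is one too many. The whole difficulty is to save that single galaxy. The natural mechanism to do this is Lemma~\ref{lg1}: if I can find a suitable tripartite subgraph $G$ inside the ``extra'' three dimensions (the $Q_3$ factor, together with how it attaches to $Q_{2^k-2}$) whose structure matches conditions~\ref{c1}--\ref{c3}, then that subgraph can be covered by just $2$ galaxies instead of the $3$ that a crude bound would demand, yielding the total $2^{k-1}+2$.

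\textbf{The main obstacle and how I would handle it.} The hard part is organizing the edges of $Q_{2^k+1}$ so that the ``costly'' portion fits the hypotheses of Lemma~\ref{lg1} exactly — in particular, producing vertex classes $V_1,V_2,V_3$ with the prescribed degrees ($4,4,2$) and the prescribed adjacency pattern (each $V_1,V_2$ vertex having exactly two neighbours in $V_3$, and each $V_3$ vertex adjacent to both sides). Concretely I would use the vertex partition of $Q_{2^k-2}$ from Lemma~\ref{ltruz}, which splits $V(Q_{2^k-2})$ into $2^{k-1}$ independent sets $A_1,\dots,A_{2^{k-1}}$ with every pairwise union inducing a $2$-regular graph. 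Combining these pairwise-$2$-regular structures with the three added coordinate directions should let me peel off one galaxy's worth of edges into a configuration satisfying Lemma~\ref{lg1}, while the remaining edges reassemble into the decomposition of $Q_{2^k-2}$ plus the leftover cube edges. I would verify the bookkeeping by a careful edge count — confirming that every edge of $Q_{2^k+1}$ is used exactly once and that the galaxies produced number precisely $2^{k-1}+2$ — and conclude equality with the lower bound.
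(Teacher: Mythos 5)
Your overall strategy is exactly the one the paper follows: the lower bound from Corollary~\ref{ceven}, the decomposition $Q_{2^k+1}=Q_{2^k-2}\,\square\,Q_3$, the vertex partition of $Q_{2^k-2}$ from Lemma~\ref{ltruz}, and the use of Lemma~\ref{lg1} to cover the leftover edges with two galaxies instead of three. However, there is a genuine gap: everything after ``should let me peel off one galaxy's worth of edges'' is precisely the content of the theorem, and you have not supplied it. The difficulty is not bookkeeping but design. You must actually exhibit $2^{k-1}$ galaxies whose removal leaves a graph meeting the very rigid hypotheses of Lemma~\ref{lg1} (degrees exactly $4,4,2$, each degree-$4$ vertex with exactly two neighbours in $V_3$, each degree-$2$ vertex adjacent to both sides, and $V_1,V_2,V_3$ all independent). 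A naive choice --- e.g.\ letting the $i$-th galaxy consist of all stars centered at the $i$-th class across all eight copies of $Q_{2^k-2}$ --- does not work, because the residual degrees come out wrong and the leftover graph is not tripartite in the required way.

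The paper's construction resolves this with a deliberately asymmetric definition: for each even-weight codeword $c\in\{000,011,110,101\}$, the galaxy $G_i$ takes all stars centered at $A_i(c)$ (including one matching edge into the copy $A_i(c^1)$), \emph{together with} stars centered at $A_{i+1}(c^1)$ in the odd-weight copy that omit two of the matchings. This pairing of $A_i$ on even-weight copies with $A_{i+1}$ on odd-weight copies is what forces the residual degrees to be $2$ on the even-weight copies and $4$ on the odd-weight copies, and the alternating assignment of $A_{2s}$ versus $A_{2s-1}$ to $V_1$ and $V_2$ is what makes those two classes independent. None of this is forced by the general shape of your plan; without specifying the galaxies and verifying the degree counts and independence (the paper's Statements 1 and 2), the proof is a plausible outline rather than a proof.
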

\begin{proof} By Corollary~\ref{ceven}, $ \sa{Q_{2^k+1}} \geq 2^{k-1}+2$. So it suffices to partition the edges of $Q_{2^k+1}$ into $2^{k-1}+2$ galaxies. 
We know that $Q_{2^k+1}= Q_{2^k-2}  \square Q_3$. Also by Lemma~\ref{ltruz} the vertices of $Q_{2^k-2}$ can be partitioned into $2^{k-1}$ sets, ${\cal A}=\{ A_1,A_2, \ldots , A_{2^{k-1}} \}$, such that an induced subgraph between each two sets is a $2$-regular subgraph. Now we need some conventions and notations. 
For a fixed $3$-bit codeword $c$ from $Q_3$, we extend each set $A_i$, $1 \leq i \leq 2^{k-1}$, to a set $A_i(c)$, which has codewords of length $2^k+1$, such that for each codeword $c^\prime \in A_i$, we append $c$ to the end of $c^\prime$ in $A_i(c)$.
Therefore for each pair of $i$ and $j$, the induced subgraph on $A_i(c) \cup A_j(c)$ in $Q_{2^k+1}$ is  a $2$-regular graph which can be decomposed into two perfect matchings. We denote  them by   $A_i(c) \rightarrow A_j(c)$ and $A_j(c) \rightarrow A_i(c)$. Also for any two $3$-bit codewords $c_1$ and $c_2$ which are different in only one bit, the induced subgraph of two sets  $A_i(c_1)$ and $A_i(c_2)$ is a perfect matching between those sets and we denote it by $A_i(c_1) \parallel A_i(c_2)$. Also for any  $3$-bit codeword $c$ we denote by  $c^i$, the $3$-bit codeword which is different from $c$
 exactly in the $i$-th bit, and by $\overline{c}$, the complement of $c$ that defers with $c$ in all bits. Also the set of all $3$-bit codewords with even weights is denoted by $E_c$,  i.e. $E_c=\{000, 011, 110,101\}$.
 Now we are ready to introduce our $2^{k-1}$ galaxies. For each  $i$, $1 \leq i \leq 2^{k-1}$, define $G_i$ as follows:
$$G_i = \mathop{\bigcup}_{c \in E_c}\{ [\mathop{\cup}_{j \neq i}\big(A_i(c) \rightarrow A_j(c)\big)] \cup [A_i(c) \parallel A_i(c^1)] \cup [\mathop{\cup}_{k \neq i , i+1} \big(A_{i+1}(c^1) \rightarrow A_k(c^1)\big)] \},$$
\ \ \  \ $1 \leq j , k \leq 2^{k-1}.$ \\ \\
Note that in the above formula the indices $i$ and $j$ and $k$ are considered modulo $2^{k-1}$.

Next we prove that the following statements hold for $G_i$:
\begin{description}
\item[] {\bf Statement 1.}\label{a} Every $G_i$ is a galaxy.
\item[] {\bf Statement 2.} \label{b} The remaining edges satisfy conditions of Lemma~\ref{lg1}.
\end{description}
By using these two statements, we derive $\sa{Q_{2^k+1}}\leq 2^{k-1}+2$, therefore the statement of the theorem will be held. 

Before proving these statements, as an example, we illustrate our construction in case of $Q_9$. We have $Q_9 = Q_6 \square Q_3$. Previously in Table~\ref{tab:Q6} a decomposition of $Q_6$ into $4$ sets as in Lemma~\ref{ltruz} is presented. In Figure~\ref{fig:aQ9}:(a), we have shown a $Q_3$ and a figure in which each of these partitioned sets is a vertex of $2K_4$, where each edge stands for a perfect matching between two corresponding sets. In Figure~\ref{fig:aQ9}:(b) the galaxy $G_1$ is represented, where again each edge represents a perfect matching. To illustrate more, we have also shown $G_3$ in Figure~\ref{fig:aQ9}:(c). Figure~\ref{fig:aQ9}:(d) is for the last two galaxies obtained from the remaining edges. Each of these presented galaxies can be mapped to a galaxy of $Q_9$ by a blow up.
\begin{figure}
\begin{center}
\includegraphics{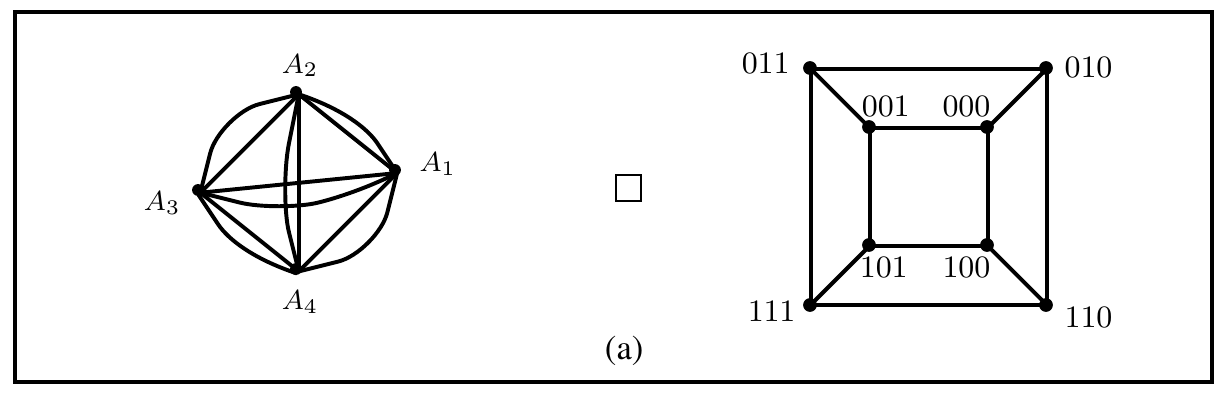}\\ 
\includegraphics{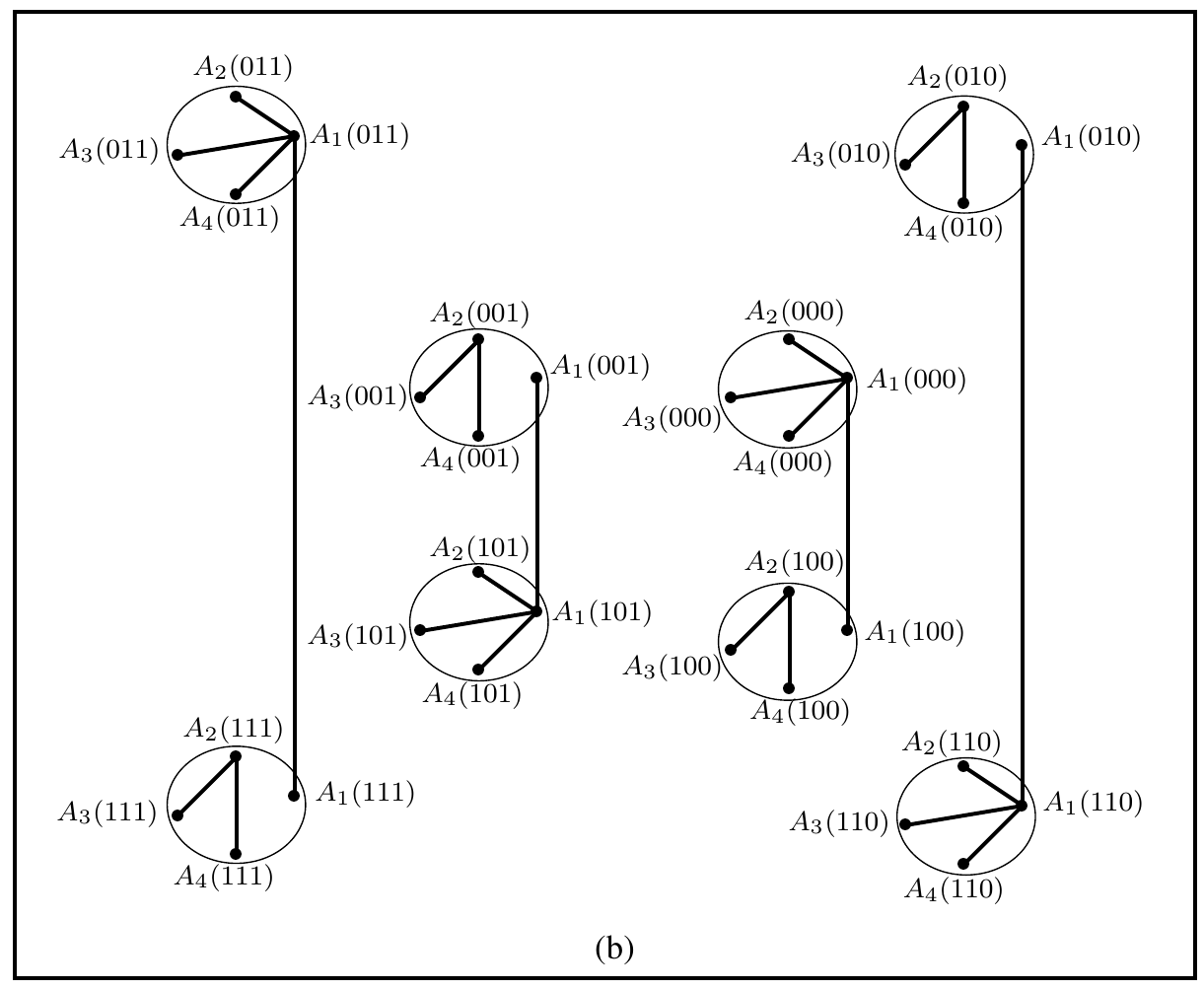}\\
\includegraphics[scale=.7]{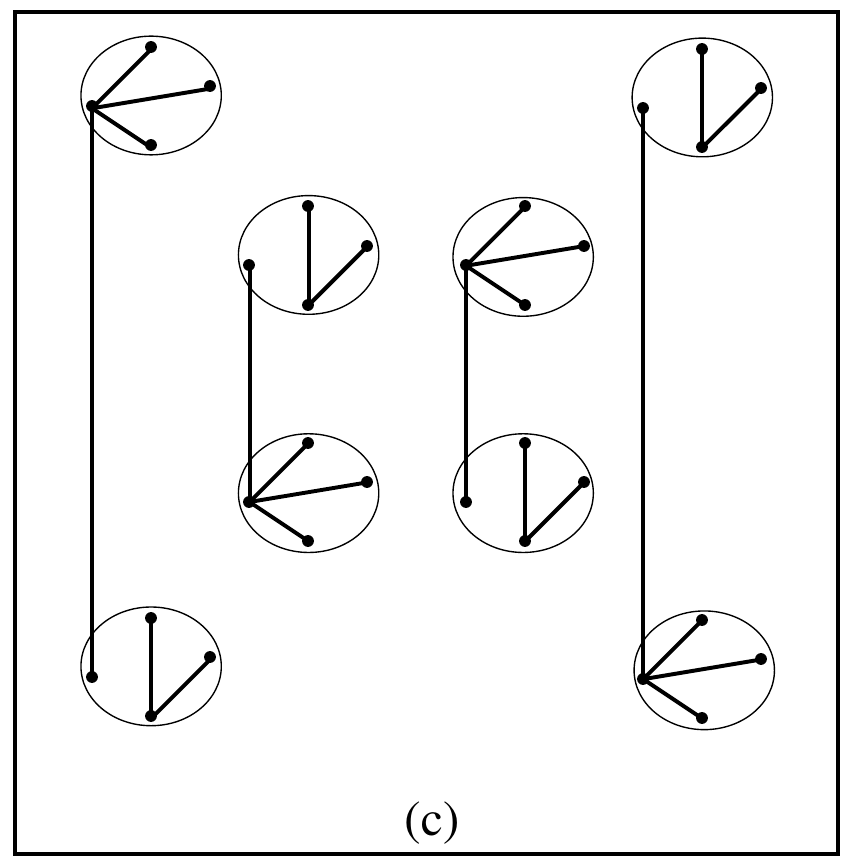} 
\includegraphics[scale=.7]{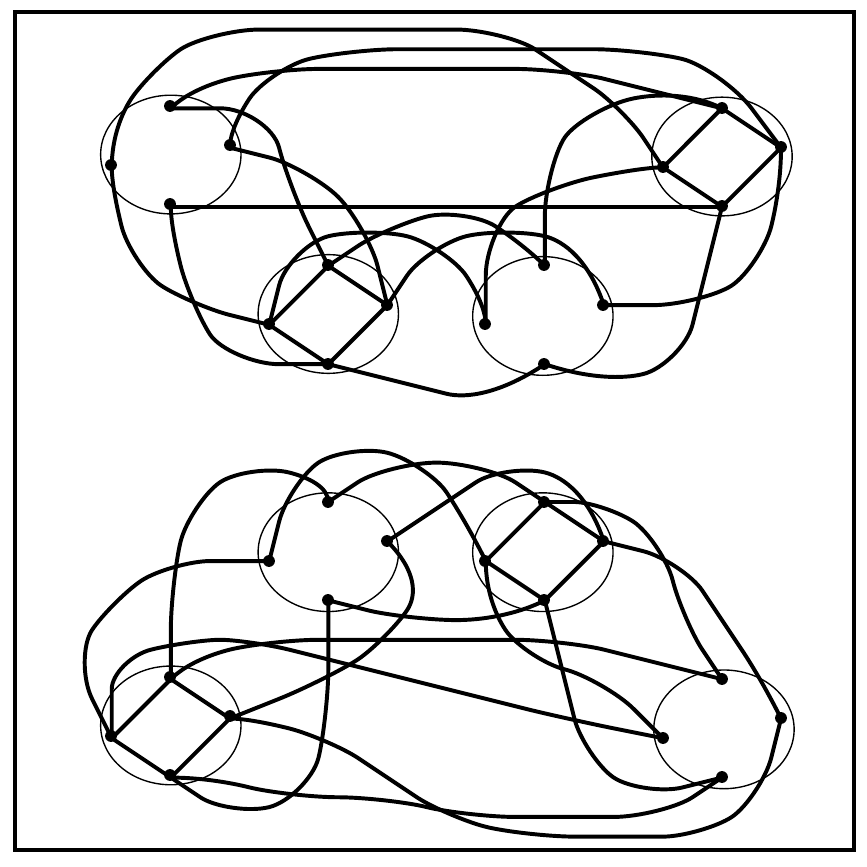}
\caption{(a) $Q_6 \square Q_3$, (b) $G_1$,  (c) $G_3$, (d) Galaxies obtained in Statement 2. }\label{fig:aQ9}
\end{center}
\end{figure}

{ \bf Proof of Statement 1.} 
By the definitions, it is obvious that for each $G_i$ and for each $c \in E_c$, the independent sets $A_i(c)$, $A_j(c)$, $A_i(c^1)$, $A_{i+1}(c^1)$ and $A_k(c^1)$, $1 \leq j, \ k  \leq 2^{k-1}$, \ $j \neq i$,  \ $k \neq i,i+1$, are mutually disjoint.  By construction, every component is a star, and  $[\mathop{\cup}_{j \neq i}\big(A_i(c) \rightarrow A_j(c)\big)] \cup [A_i(c) \parallel A_i(c^1)]$ is a union of stars with centers at the vertices in $A_i(c)$, and similarly $\mathop{\cup}_{k \neq i , i+1} \big(A_{i+1}(c^1) \rightarrow A_k(c^1)\big)$ is a union of stars with centers at the vertices in $A_{i+1}(c^1)$. Since here every $c^1$ corresponds to exactly one $c$, it follows that these stars have no overlaps.

{\bf Proof of Statement 2.} We must prove that the remaining edges make a new graph which satisfies conditions of Lemma~\ref{lg1}. 
%
Let
$$
\begin{array}{l}
V_1=\bigcup_{1 \leq s \leq 2^{k-2}}\big( A_{2s}(001) \cup A_{2s}(100) \cup A_{2s-1}(010) \cup A_{2s-1}(111) \big),  \\
V_2=\bigcup_{1 \leq s \leq 2^{k-2}}\big( A_{2s-1}(001) \cup A_{2s-1}(100) \cup A_{2s}(010) \cup A_{2s}(111) \big),  \\
V_3=\bigcup_{c \in E_c}\big( \cup_{1 \leq i \leq 2^{k-1}} A_i(c) \big).
\end{array}
$$
\\ 
First, we show that in the graph of remaining edges, $V_3$ is a set of vertices with degree $2$. By construction, each vertex in $V_3$, i.e. $A_i(c)$, $c \in E_c$, is a center with degree $2^{k-1}-1+1$ in a star of $G_i$, and is a leaf in any other $G_j$, $j \neq i$,  $1 \leq i,j \leq 2^{k-1}$. So each vertex in $A_i(c)$ is covered in the given galaxies by totally $(2^{k-1}-1+1) + (2^{k-1}-1)\times 1=2^k-1$ adjacent edges. Since $Q_{2^k+1}$, is $(2^k+1)$-regular graph, so in the remaining graph the degree of each vertex in $A_i(c)$ is $2$. Next we show that each of the remaining vertices which are in $V_1 \cup V_2=\bigcup_{c \in E_c}\big(\cup_i A_i(\overline{c})\big)$, $1\leq i \leq 2^{k-1}$, has degree $4$. For each vertex $v$ in  $A_i(\overline{c})$, it is clear that the galaxy $G_{i-1}$ covers $2^{k-1}-2$ edges of $v$ and each of the other galaxies, $G_j$, $j \neq i-1$, covers $1$ edge of $v$. So $(2^{k-1}-2)+(2^{k-1}-1) \times 1=2^k-3$ edges of each vertex in $A_i(\overline{c})$ is covered by all $G_j$, $1\leq j \leq 2^{k-1}$. Thus each vertex in $A_i(\overline{c})$ has $4$ uncovered edges.
Therefore each of the vertices in $V_1 \cup V_2$ has degree $4$ and the vertices in $V_3$ have degree $2$, hence Condition~(\ref{c2}) is satisfied.

For Condition~(\ref{c3}), by the given construction we note that in the first $2^{k-1}$ galaxies each vertex of $V_3$, i.e.  $A_i(c)$ ($i$ fixed and $c \in E_c$) is adjacent just to one of the vertices of  $V_1 \cup V_2$, i.e. $A_i(c^1)$.
So in the remaining graph each vertex in $V_3$, which has degree $2$, is adjacent to both a vertex with degree $4$ in $A_i(c^2)$, and another vertex with degree $4$ in $A_i(c^3)$. Since $A_i(c^2) \cup A_i(c^3)\subseteq V_1 \cup V_2$, so vertices of $V_3$ are independent. Since  $c^2$ and $c^3$ are different in the last two bits, so $A_i(c^2)$ and $A_i(c^3)$ can not be in the same $V_j$, $j \in \{1, 2\}$.
Also each vertex in $A_i(\overline{c})$, which is in $V_1$ or $V_2$, is adjacent to a vertex with degree $2$ in $A_i(\overline{c}^2)$ and another vertex with degree $2$ in $A_i(\overline{c}^3)$.  Thus each vertex in $V_1$ and $V_2$ has exactly $2$ neighbors in $V_3$. Hence Condition~(\ref{c3}) holds.

To prove Condition~(\ref{c1}), it remains to show that each of $V_1$ and $V_2$ is an independent set. 
As we have seen each vertex in $A_i(\overline{c})$ has degree $4$ and two of its neighbors are in $V_3$. The other two neighbors are in the sets $A_{i+1}(\overline{c})$ and $A_{i-1}(\overline{c})$. By the definition of $V_1$ and $V_2$, it is obvious that  $A_i(\overline{c})$ is not in the same $V_j$, $j \in \{1, 2\}$, as $A_{i+1}(\overline{c})$ and $A_{i-1}(\overline{c})$ are. 
\end{proof}

\newpage
\begin{lemma} \label{lf}
$\sa{Q_n}=\lfloor {n \over 2}\rfloor+2$ \ for \ $n=2^k+4$ \ and \ $2^k-4 \leq n \leq 2^k+2$ \ except for \ $2^k-2$.
\end{lemma}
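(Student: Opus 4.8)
The plan is to separate the two inequalities. For every value of $n$ in the list, the lower bound $\sa{Q_n} \ge \lfloor n/2 \rfloor + 2$ is immediate from Corollary~\ref{ceven}, since none of the values $2^k-4,\,2^k-3,\,2^k-1,\,2^k,\,2^k+1,\,2^k+2,\,2^k+4$ is of the form $2^a-2$: the odd values are never $2^a-2$ (that quantity is even), and for the even ones the equation $2^a-2=n$ has no solution once $k$ is large enough. So all the content lies in the matching upper bound $\sa{Q_n}\le\lfloor n/2\rfloor+2$, which I would establish case by case. Throughout I will lean on the elementary monotonicity $\sa{Q_m}\le\sa{Q_{m+1}}$: because $Q_m$ is a subgraph of $Q_{m+1}$, restricting any galaxy partition of $Q_{m+1}$ to the edges of $Q_m$ gives a galaxy partition of $Q_m$ (a subgraph of a star forest is again a star forest).

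Several cases are already on record or fall out in one line. The values $n=2^k-1$ and $n=2^k+1$ are exactly Corollary~\ref{l2} and Theorem~\ref{l9}. For $n=2^k-4=2^{k-1}+2^{k-1}-4$ and for $n=2^k+4=2^k+2^{3}-4$, Proposition~\ref{l4} gives the exact values $2^{k-1}$ and $2^{k-1}+4$ (taking $j=k-1$ in the first and $j=3$ in the second, legitimate once $k\ge 3$), and both equal $\lfloor n/2\rfloor+2$. For $n=2^k+2$, writing $Q_{2^k+2}=Q_{2^k+1}\square K_2$ and combining Lemma~\ref{lcart} with Theorem~\ref{l9} yields $\sa{Q_{2^k+2}}\le(2^{k-1}+2)+1=2^{k-1}+3=\lfloor n/2\rfloor+2$.

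The two remaining cases, $n=2^k-3$ and $n=2^k$, are where the only genuine care is needed, and they are exactly why monotonicity is used rather than the product bound. Lemma~\ref{lcart} is too weak here: $Q_{2^k-3}=Q_{2^k-4}\square K_2$ would only give $\sa{Q_{2^k-3}}\le 2^{k-1}+1$, one too many. Instead I sandwich each value from above by a known exact result. For $n=2^k-3$, monotonicity and Theorem~\ref{t1} give $\sa{Q_{2^k-3}}\le\sa{Q_{2^k-2}}=2^{k-1}=\lfloor n/2\rfloor+2$; for $n=2^k$, monotonicity and Theorem~\ref{l9} give $\sa{Q_{2^k}}\le\sa{Q_{2^k+1}}=2^{k-1}+2=\lfloor n/2\rfloor+2$. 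Pairing each upper bound with the lower bound from Corollary~\ref{ceven} forces equality in every case. The main obstacle is organizational rather than conceptual: keeping the arithmetic of $\lfloor n/2\rfloor+2$ aligned across the seven patterns and checking the small-$k$ boundary conditions (for instance $n=2^k+2$ equals $2^a-2$ precisely when $k=2$, an excluded value) so that Corollary~\ref{ceven} and Proposition~\ref{l4} legitimately apply.
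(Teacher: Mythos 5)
Your proposal is correct and follows essentially the same route as the paper: lower bounds from Corollary~\ref{ceven}, and upper bounds case by case from Proposition~\ref{l4} (for $2^k-4$ and $2^k+4$), Theorem~\ref{l9} together with monotonicity or the $\square K_2$ bound (for $2^k$ and $2^k+2$, which is exactly what Theorem~\ref{tt} encapsulates), and $\sa{Q_{2^k-3}}\leq\sa{Q_{2^k-2}}=2^{k-1}$ for $2^k-3$. The only difference is presentational: the paper disposes of the small cases $n\leq 10$ with a table rather than the boundary-condition discussion you sketch.
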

\begin{proof}
We can check that the statement holds for  $n \leq 10$, see Table~\ref{tab:table2}.
\begin{table}[!ht]
\begin{center}
$\begin{array}{c|cccccccccc}
n                &1 &2 &3 & 4 &5 &6 & 7 &8 &9 & 10    \\ \hline
{\rm sa}(Q_{n}) &1 &2 &3 & 4 &4 &4 & 5 &6 &6 & 7 \\ \hline
\end{array}
$
\caption{$\sa{Q_n}$ for $n \leq 10$. }\label{tab:table2}
\end{center}
\end{table}

So let $k \geq 3$, other than previous mentioned cases in Theorem~\ref{t1}, Theorem~\ref{l9} and Corollary~\ref{l2}, for the remaining cases the statement holds as follows:
\begin{itemize}
\item $n=2^k+4$, (by Propsition~\ref{l4} for $j=3$),
\item $n=2^k+2$,  (by Theorem~\ref{l9} and Theorem~\ref{tt} for $n=2^k+1$),
\item $n=2^k$, (by Propsition~\ref{l4} for $j=2$) and (by Theorem~\ref{l9} and Theorem~\ref{tt} for $n=2^k+1$),
\item $n=2^k-3$, ($\sa{Q_{2^k-3}} \leq \sa{Q_{2^k-2}}=2^{k-1}$ and by Corollary~\ref{ceven} for $n=2^k-3$),
\item $n=2^m-4$,  (by Propsition~\ref{l4} for $k=j=m-1$) and (by Theorem~\ref{tt} for $n=2^m-3$).
\end{itemize}
\vspace*{-5mm}\end{proof}
%
The value of $\sa{Q_{2^k+3}}$ is left open, but we know that $2^{k-1}+3 \leq \sa{Q_{2^k+3}} \leq 2^{k-1}+4$. So the smallest unknown case is $\sa{Q_{11}}$.

\subsection{An upper bound}
In the following theorem, we improve the known upper bound on $\sa{Q_n}$ by a method similar
to the proof of  Theorem~\ref{tlog}.
\begin{theorem}\label{tUp}
Let $n$ be an even integer. We can write $n$ as $n=\sum_{j=1}^l(2^{i_j}-2)+r$, where $r$ is in $ {\cal R}= \{ 2^k+2,  2^s+2^t-4\}$, $s \geq t \geq 2$, $i_1 > i_2 > \cdots > i_l$ and $l$ is the smallest number with this property. Also we have
 $$
 \sa{Q_n} \leq  {n \over 2}+l+2.
 $$


\end{theorem}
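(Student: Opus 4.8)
The plan is to decompose $n$ into the pieces promised by the statement and then assemble galaxies from the corresponding subcube factorizations, exactly mirroring the strategy behind Theorem~\ref{tlog} but using the sharper building blocks developed in this section. First I would write $Q_n$ as an iterated cartesian product matching the decomposition $n=\sum_{j=1}^l(2^{i_j}-2)+r$, namely
$$
Q_n = Q_{2^{i_1}-2}\,\square\,Q_{2^{i_2}-2}\,\square\,\cdots\,\square\,Q_{2^{i_l}-2}\,\square\,Q_r.
$$
This is legitimate because $Q_{a+b}=Q_a\,\square\,Q_b$ follows by iterating the recursive definition $Q_n=Q_{n-1}\,\square\,K_2$.

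Next I would bound the star arboricity of each factor by the exact values already in hand. By Theorem~\ref{t1}, $\sa{Q_{2^{i_j}-2}}=2^{i_j-1}=\tfrac12(2^{i_j}-2)+\tfrac12\cdot 2 - \tfrac12\cdot 2$; more usefully, $\sa{Q_{2^{i_j}-2}}=\tfrac{(2^{i_j}-2)}{2}+1$, so each such factor contributes its half-dimension plus exactly one. For the remainder term $r\in\mathcal R$, either $r=2^k+2$, handled by Theorem~\ref{l9} together with Theorem~\ref{tt} (giving $\sa{Q_r}=\tfrac r2+2$), or $r=2^s+2^t-4$, handled by Proposition~\ref{l4} (giving $\sa{Q_r}=\tfrac r2+2$ as well, since $2^{s-1}+2^{t-1}=\tfrac{(2^s+2^t-4)}{2}+2$). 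Then I would apply Lemma~\ref{lcart} repeatedly across the $l+1$ factors to get
$$
\sa{Q_n}\ \le\ \sum_{j=1}^l\Bigl(\tfrac{2^{i_j}-2}{2}+1\Bigr)+\Bigl(\tfrac r2+2\Bigr)
\ =\ \tfrac12\Bigl(\sum_{j=1}^l(2^{i_j}-2)+r\Bigr)+l+2
\ =\ \tfrac n2+l+2,
$$
which is the claimed bound.

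The only genuine obstacle is confirming that every even $n$ admits such a decomposition with the stated pieces, i.e. that the remainder can always be pushed into $\mathcal R$ and that the $2^{i_j}-2$ summands can be taken with strictly decreasing exponents. I would argue this by a greedy/binary-digit analysis: subtract the largest $2^{i}-2$ not exceeding $n$, repeat, and show the terminal remainder necessarily lands in $\{2^k+2,\ 2^s+2^t-4\}$ for appropriate parameters. I would also verify that $l$ being chosen minimal does not conflict with the strict inequalities $i_1>\cdots>i_l$; this is where I expect the bookkeeping to be most delicate, and I would treat the small residual cases (small $n$, or when only one term is needed) separately to close the argument cleanly.
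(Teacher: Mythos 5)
Your proposal is correct and follows essentially the same route as the paper: write $Q_n$ as the cartesian product of the factors $Q_{2^{i_j}-2}$ and $Q_r$, apply Lemma~\ref{lcart} iteratively, and substitute the exact values from Theorem~\ref{t1}, Proposition~\ref{l4} and Lemma~\ref{lf}, with the identical arithmetic yielding $\frac{n}{2}+l+2$. The greedy subtraction of the largest $2^i-2$ that you defer as the ``only genuine obstacle'' is precisely how the paper constructs the decomposition (and it treats that step with the same level of informality), so nothing essential is missing.
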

\begin{proof} 
If $n \in {\cal R}$ then $l=0$ and the statement holds by Lemma~\ref{lf} or by  Proposition~\ref{l4}. Otherwise  it is easy to see that $n$ can be written as $2^{i_1}-2+r_1$, where $i_1$ is the largest possible integer and $r_1$ is the remainder. If $r_1=0$ then the statement holds by Theorem~\ref{t1}, else $4 < r_1 < 2^{i_1}-2$. If $r_1 \in \cal R$ then $l=1$ and by Lemma~\ref{lf} and Proposition~\ref{l4}, $\sa{Q_{r_1}}={r_1 \over 2}+2$ and $\sa{Q_n} \leq \sa{Q_{2^{i_1}-2}}+\sa{Q_{r_1}}=2^{i_1-1}+{r_1 \over 2}+2={n \over 2}+3$, and we are done. Else, again $r_1$ can be written as $2^{i_2}-2+r_2$, where $i_2$ is the largest possible integer and so on. Thus assume $n=\sum_{j=1}^l(2^{i_j}-2)+r$, $r \in {\cal R}$, then 
\\ \\
$
\begin{array}{rl}
\sa{Q_{n}} & \leq \sa{Q_{2^{i_1}-2}} + \sa{Q_{2^{i_2}-2}}+ \cdots + \sa{Q_{2^{i_l}-2}} + \sa{Q_r} \\
& =  2^{i_1-1} + 2^{i_2-1} + \cdots + 2^{i_l-1} + \dfrac{\strut r }{\strut 2}+2\\
& = \dfrac{\strut 2^{i_1}-2 }{2} + \dfrac{\strut 2^{i_2}-2}{ 2} + \cdots + \dfrac{\strut 2^{i_l}-2 }{ 2} + l+ \dfrac{ r }{ 2}+2 \\
& = \dfrac{\strut n-r }{ 2} + l + \dfrac{r }{ 2}+2 \\
& = \dfrac{\strut n }{ 2} + l + 2.
\end{array}$
\\ \vspace*{-9mm}
\end{proof}
\vspace*{9mm}
\begin{corollary}
Let $n \geq 1$, then $\sa{Q_n}\leq \lceil {n \over 2}\rceil +l +2$, where $l$ is obtained for $n$ or $n-1$ as in Theorem~\ref{tUp}, whether $n$ is even or odd, respectively.
\end{corollary}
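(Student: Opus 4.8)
The plan is to reduce everything to Theorem~\ref{tUp} by separating the two parities of $n$. For even $n$ the corollary is immediate: here $\lceil n/2 \rceil = n/2$, so the asserted inequality $\sa{Q_n} \le \lceil n/2 \rceil + l + 2$ coincides verbatim with the conclusion of Theorem~\ref{tUp}, where $l$ is the number of summands in the greedy representation $n = \sum_{j=1}^{l}(2^{i_j}-2) + r$ with $r \in {\cal R}$. No new work is required in this case.

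The content lies in the odd case. First I would invoke the recursive definition $Q_n = Q_{n-1}\,\square\,K_2$ together with Lemma~\ref{lcart} and the trivial identity $\sa{K_2}=1$ (a single edge is one star) to obtain the one-step bound $\sa{Q_n} \le \sa{Q_{n-1}} + 1$, a move already used repeatedly in the earlier corollaries. Since $n$ is odd, $n-1$ is even, so Theorem~\ref{tUp} applies to $n-1$ and produces a value $l$ with $\sa{Q_{n-1}} \le (n-1)/2 + l + 2$. Chaining the two inequalities gives $\sa{Q_n} \le (n-1)/2 + l + 3$, and the elementary identity $(n-1)/2 + 3 = (n+1)/2 + 2 = \lceil n/2 \rceil + 2$, valid precisely because $n$ is odd, turns this into the claimed $\sa{Q_n} \le \lceil n/2 \rceil + l + 2$. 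This explains why the statement prescribes reading $l$ off from $n-1$ rather than from $n$ when $n$ is odd: the extra $+1$ coming from the $K_2$ factor is exactly the gap $\lceil n/2 \rceil - (n-1)/2$.

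I do not anticipate a genuine obstacle, since the corollary is essentially a parity-bookkeeping wrapper around Theorem~\ref{tUp} with the cartesian-product inequality absorbing the half-integer introduced by the ceiling. The one point deserving care is the range of validity of the greedy representation underlying Theorem~\ref{tUp}, whose smallest admissible remainder in ${\cal R}$ is $4$ (attained by $2^2+2^2-4$); this forces $n \ge 4$ for the even argument and $n \ge 5$ for the odd reduction. I would therefore dispose of $n \in \{1,2,3,4\}$ separately, reading their exact values off Table~\ref{tab:table2} and verifying directly that each satisfies the (non-tight) bound with $l=0$, and then confirm in the main argument that $l$ is well defined for the even integer to which Theorem~\ref{tUp} is actually applied.
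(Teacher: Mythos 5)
Your proposal is correct and is exactly the argument the paper intends: the paper states this corollary without proof, treating it as immediate from Theorem~\ref{tUp} for even $n$ and from $\sa{Q_n}\leq\sa{Q_{n-1}}+1$ plus the identity $\lceil n/2\rceil=(n-1)/2+1$ for odd $n$. Your extra care about small $n$ and the well-definedness of $l$ is sound but not something the paper bothers with.
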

\begin{corollary}
$\sa{Q_n} \leq \lceil {n \over 2}\rceil +\lfloor  \log_2 n \rfloor-1$ \ for \ $n \geq 5$.
\end{corollary}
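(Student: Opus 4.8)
The plan is to deduce this from Theorem~\ref{tUp} and the corollary immediately preceding this one, by controlling the number $l$ of summands. Since that corollary already gives $\sa{Q_n}\le \lceil n/2\rceil + l + 2$, it suffices to prove the purely arithmetic inequality $l \le \lfloor \log_2 n\rfloor - 3$, where $l$ is the greedy count produced for $n$ (if $n$ is even) or for $n-1$ (if $n$ is odd). I would first peel off the values not governed by this inequality: the numbers $n = 2^a - 2$, for which Theorem~\ref{t1} gives $\sa{Q_n} = n/2 + 1$ and the claimed bound reduces to $a \ge 3$, i.e.\ $n \ge 6$; and the small odd values $n \in \{5,7\}$ (where $\lfloor \log_2 n\rfloor - 3 < 0$), which I would read off from Table~\ref{tab:table2}, noting that $n=6$ already falls under the $2^a-2$ family. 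For the odd case it then suffices to observe that $l$ is computed from $n-1$ and that $\lfloor \log_2(n-1)\rfloor \le \lfloor\log_2 n\rfloor$, so everything reduces to even $n$.

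The heart of the argument is the estimate: for every even $m$ whose greedy decomposition in Theorem~\ref{tUp} uses $l \ge 1$ summands, one has $m \ge 2^{l+3}$, equivalently $l \le \lfloor \log_2 m\rfloor - 3$. I would prove this by strong induction on $m$. Write $m = (2^{i_1}-2) + r_1$ with $i_1$ maximal. The key observation is that $r_1$ is even and, by maximality of $i_1$, satisfies $r_1 < 2^{i_1}$; hence $r_1 \le 2^{i_1} - 2$, and therefore $m = r_1 + (2^{i_1}-2) \ge 2 r_1$, so each greedy step at least halves the remainder. If $r_1$ is already terminal (it lands in ${\cal R}$, or equals $0$ or $2^a-2$), then $l = 1$ and the base case below forces $m \ge 16 = 2^4$; otherwise $l = 1 + l(r_1)$ with $l(r_1) \ge 1$, and the inductive hypothesis $r_1 \ge 2^{l(r_1)+3}$ gives $m \ge 2 r_1 \ge 2^{l(r_1)+4} = 2^{l+3}$.

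The base case — and the single delicate point — is the claim that $l \ge 1$ already forces $m \ge 16$. This is exactly what upgrades the naive bound $l \le \lfloor \log_2 m\rfloor - 2$ (all one gets from ``the $i_j$ are distinct and each $\ge 3$'', or from halving down to a terminal remainder $\ge 6$) to the sharper $l \le \lfloor \log_2 m\rfloor - 3$ that the statement needs. I would establish it by checking directly that every even $m$ with $6 \le m \le 14$ lies in ${\cal R}\cup\{2^a-2\}$ and hence is terminal with $l = 0$: indeed $6 = 2^3-2$ and $14 = 2^4-2$, while $8 = 2^3+2^2-4$, $10 = 2^3+2$, and $12 = 2^3+2^3-4$ all lie in ${\cal R}$. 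Consequently no even $m < 16$ has $l \ge 1$, which anchors the induction. Combining the pieces yields $l \le \lfloor \log_2 n\rfloor - 3$ in all remaining cases, and substituting into $\sa{Q_n} \le \lceil n/2\rceil + l + 2$ finishes the proof; the main obstacle throughout is securing this off-by-one improvement rather than the decomposition itself.
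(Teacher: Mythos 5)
Your proposal is correct and follows essentially the same route as the paper: both reduce to even $n$, use the decomposition $n=\sum_j(2^{i_j}-2)+r$ of Theorem~\ref{tUp} together with the bound $\sa{Q_n}\le \lceil n/2\rceil+l+2$, and extract the crucial extra ``$-1$'' from the fact that the terminal remainder $r$ cannot be too small. The paper packages this as $l\le\lfloor\log_2 n\rfloor-\lfloor\log_2 r\rfloor$ combined with $\lfloor\log_2 r\rfloor=k\ge 3$, while your induction anchored at ``every even $m\le 14$ already lies in ${\cal R}\cup\{2^a-2\}$'' establishes the equivalent estimate $l\le\lfloor\log_2 n\rfloor-3$ somewhat more carefully, but the underlying counting is the same.
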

\begin{proof}
For $n=5$ or $6$ it follows from Lemma~\ref{lf}.
If the statement holds for an even number $n$, then it holds for $n+1$ as follows 
$$\begin{array}{lll}
\sa{Q_{n+1}} &=\sa{Q_n \square K_2} \\
& \leq \strut \sa{Q_n}+1  & \\
& \leq  \Big\lceil \dfrac{\strut n }{\strut 2} \Big\rceil +\lfloor \log_2 n \rfloor -1+1 & \\
& = \Big\lceil \dfrac{\strut n +1 }{\strut 2} \Big\rceil +\lfloor \log_2 (n+1) \rfloor -1 & ({\rm Since \ for \ even} \  n, \  
\lfloor \log_2 (n+1) \rfloor =\lfloor \log_2 n \rfloor).
\end{array}$$
So it suffices to prove the corollary for even number $n$.

It is easy to see that $n$ can be represented as a sum of $\lfloor \log_2 n \rfloor$ numbers of the form $2^k-2$, i.e. $n=\sum_{j=1}^m(2^{i_j}-2)$, $m \leq \lfloor \log_2n \rfloor$. In Theorem~\ref{tUp}, we represented $n=\sum_{j=1}^l(2^{i_j}-2)+r$. So $l \leq \lfloor \log_2 n \rfloor - \lfloor  \log_2 r \rfloor$.
As in Theorem~\ref{tUp}, 
$\sa{Q_n} \leq {n-r \over 2}+ \lfloor  \log_2 n \rfloor - \lfloor  \log_2 r \rfloor +\sa{Q_r}$, $r \in {\cal R}$.

Now if $r=2^k+2$, then we have
\\ \\
$
\begin{array}{ll}
 \sa{Q_n}  &\leq \dfrac{n-2^k-2 }{2}+ \lfloor  \log_2 n \rfloor -k+2^{k-1}+3 \\
& = \dfrac{\strut n}{\strut 2} - 2^{k-1}-1 + \lfloor  \log_2 n \rfloor -k + 2^{k-1}+3 \\
&= \dfrac{\strut n}{\strut 2}+ \lfloor  \log_2 n \rfloor -k +2.\end{array}
$
\\ \\
If $r=2^k+2^j-4$ and $k \geq j \geq 2$,
\\ \\
$
\begin{array}{ll}
 \sa{Q_n}  &\leq \dfrac{n-2^k-2^j+4 }{ 2}+ \lfloor  \log_2 n \rfloor -k+2^{k-1}+2^{j-1} \\
 & = \dfrac{\strut n }{\strut 2} - 2^{k-1}-2^{j-1}+2 + \lfloor  \log_2 n \rfloor -k + 2^{k-1}+2^{j-1} \\
&=\dfrac{\strut n }{\strut 2}+ \lfloor  \log_2 n \rfloor -k +2.
\end{array}
$
\\ \\
As we have seen in both cases $\sa{Q_n} \leq {n \over 2}+ \lfloor  \log_2 n \rfloor -k +2$. In both cases $k$ can be considered greater than or equal to $3$. As an example for $r=2^k+2^j-4$, assume $k=2$, then $j=2$ and $r=4$. Hence the last two numbers in $\sum_{j=1}^l(2^{i_j}-2)+r$, are $2^{i_l}-2$ and $r$ where $i_l \geq 3$. So we have $2^{i_l}-2+r=2^{i_l}-2+4=2^{i_l}+2$ which is a contradiction the choice of $r$.
Therefore $\sa{Q_n} \leq {n \over 2}+ \lfloor  \log_2 n \rfloor -k +2 \leq {n \over 2}+ \lfloor  \log_2 n \rfloor-1$. 
\end{proof} 
\section{Coloring and star arboricity}
The connection of star arboricity with other colorings such as incidence coloring and acyclic coloring are studied (see
 \cite{MR1428581} and \cite{MR1375101}).  In this section we consider the connection between square coloring and star arboricity of graphs. 

Square of a graph $G$ is a graph denoted by $G^2$ with ${\rm V}(G)={\rm V}(G^2)$ and two vertices are adjacent if their distance in $G$ is  at most $2$. A square-coloring of $G$ is a proper coloring of $G^2$. Let $\chi(G^2)$ be the minimum number of colors used in any square-coloring of $G$. 
\begin{theorem} \label{tsquare}
If $\chi(G^2) \leq k$ \ then \ $\sa{G}\leq \lceil {k \over 2}\rceil+1$, $k \geq 4$.
\end{theorem}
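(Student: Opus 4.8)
The plan is to reduce the statement to the star arboricity of the complete graph $K_k$ and then invoke Theorem~\ref{tcomplete}. Fix a proper coloring of $G^2$ with color set $\{1,\dots,k\}$, inducing a vertex partition $V(G)=C_1\cup\cdots\cup C_k$ (padding with empty classes if fewer than $k$ colors are actually used). Two structural facts drive everything. First, each class $C_i$ is a \emph{$2$-packing} in $G$: any two of its vertices lie at distance at least $3$ in $G$, since they are nonadjacent in $G^2$. Second, for any two colors $a\neq b$ the edges of $G$ joining $C_a$ to $C_b$ form a matching $M_{a,b}$: if some $u\in C_a$ had two neighbours $v,v'\in C_b$, then $v,v'$ would share the neighbour $u$, placing them at distance $2$ with the same color, a contradiction, and symmetrically on the $C_b$ side. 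Since adjacent vertices always receive distinct colors, every edge of $G$ lies in exactly one matching $M_{a,b}$.

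Next I would set up a pullback from $K_k$. View the colors $1,\dots,k$ as the vertices of $K_k$, so that each edge $\{a,b\}$ of $K_k$ corresponds to the matching $M_{a,b}$. Let $E(K_k)=S_1\cup\cdots\cup S_t$ be an optimal decomposition of $K_k$ into star forests, and set $\mathcal F_p=\bigcup_{\{a,b\}\in S_p}M_{a,b}$ for each $p$. Because each edge of $G$ sits in a unique $M_{a,b}$ and each pair $\{a,b\}$ in a unique $S_p$, the families $\mathcal F_1,\dots,\mathcal F_t$ partition $E(G)$. The crux is to show each $\mathcal F_p$ is a galaxy. I would prove this first for a single star of $K_k$, say one centred at color $a$ with leaf colors $b_1,\dots,b_m$: in $\bigcup_j M_{a,b_j}$ every vertex outside $C_a$ has degree at most $1$, while each $u\in C_a$ is the centre of a star whose leaves have pairwise distinct colors; distinct centres in $C_a$ share no leaf (the $2$-packing property forbids a common neighbour) and are nonadjacent, so the union is a star forest. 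For a general star forest $S_p$, its individual stars use pairwise disjoint color sets, so their pullbacks live on disjoint color classes and cannot interfere, whence $\mathcal F_p$ is again a disjoint union of stars.

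This yields $\sa{G}\le t=\sa{K_k}$, and by Theorem~\ref{tcomplete} we have $\sa{K_k}=\lceil k/2\rceil+1$ precisely when $k\ge 4$, which is exactly the hypothesis; this also explains why the bound is stated for $k\ge 4$. The main obstacle I anticipate is the verification that the pullback of a star forest of $K_k$ is genuinely a star forest in $G$, that is, that no path on three edges and no cycle is created, since this is where both structural facts (each class a $2$-packing, each color pair a matching) are essential. Everything else is bookkeeping: confirming that the $\mathcal F_p$ cover every edge of $G$ exactly once and that empty matchings cause no difficulty.
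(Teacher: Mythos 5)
Your proposal is correct and follows essentially the same route as the paper: both show that the edges between any two color classes of the square coloring form a matching, pass to the complete graph $K_k$ on the color classes, apply Theorem~\ref{tcomplete}, and pull the star forests back to $G$. The only difference is that you spell out the verification that the pullback of a star forest is again a star forest (via the $2$-packing property of each class), a step the paper compresses into the phrase ``by a blow up.''
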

\begin{proof}
Let $c$ be a proper $k$-coloring of $G^2$ with color classes $C_1, C_2, \ldots, C_k$. We show that the degree of vertices in any induced subgraph on each pair of color classes is at most $1$. Assume to the contrary that there are two  classes $C_i$ and $C_j$, $1 \leq i, j \leq k$, such that an induced subgraph on them has a vertex of degree at least $2$. Without loss of generality let $v$ be a vertex in $C_i$ with ${\rm deg}_{\langle C_i \cup C_j\rangle }v=2$. So $v$ has at least two neighbors $u$ and $w$ in $C_j$. But in $G^2$, $u$ and $w$ are adjacent, that is contradiction with $c$ being a proper coloring. Thus the vertices of $G$ are partitioned into $k$ independent sets such that an induced subgraph on each pair of them is a matching. 

Now using $G$ we construct a graph $H$ as follows. Each vertex of $H$ corresponds to a color class of $G$ and two vertices are adjacent if there is an edge between their corresponding color classes. Clearly $H$ is a subgraph of $K_k$. Thus from Theorem~\ref{tcomplete}, $\sa{H} \leq  \lceil {k \over 2} \rceil+1$. By a blow up each galaxy of $H$ can be mapped to a galaxy of $G$.
\end{proof}
%
Note that the result in the theorem can be sharp. As an example for $Q_{2^t-1}$, we have $\chi(Q_{2^t-1}^2)=2^t$ (see \cite{MR2452764} and \cite{MR2098840}) which implies that $\sa{Q_{2^t-1}} \leq 2^{t-1}+1$.
\section*{Acknowledgements}
The authors appreciate Maryam Sharifzadeh for her useful comments and  presence in the earlier discussions and Hamed Sarvari for his computer program.

%
\end{document}